\newcommand{\ad}{{\rm ad}}
\newcommand{\Tr}{{\rm Tr}\,}
\begin{document}
\title{Hyperk\"ahler Marriage of the two sphere with the hyperbolic space}
%
%
\author{Alice Barbora Tumpach\inst{1, 2}\orcidID{0000-0002-7771-6758} }

\authorrunning{Alice Barbora Tumpach}
%
\institute{Wolfgang Pauli Institut, Oskar-Morgensternplatz 1, 1090 Vienna, Austria\\
 \and
Laboratoire Painlev\'e, Lille University, 59650 Villeneuve d'Ascq, France
\email{alice-barbora.tumpach@univ-lille.fr}\\
 }
\maketitle              
\begin{abstract}The Eguchi-Hanson metric is a natural metric on the total space of the
cotangent bundle $T^*\mathbb{CP}(1)$ of the complex projective line $\mathbb{CP}(1) \simeq \mathbb{S}^2$, which extends the Fubini-Study metric of $\mathbb{CP}(1)$. By virtue of the Mostow decomposition theorem, $T^*\mathbb{CP}(1)$ is isomorphic, as $SU(2)$-equivariant fiber bundle over $\mathbb{CP}(1)$, to a complex (co-)adjoint orbit of $SL(2, \mathbb{C})$. In fact, this  complex (co-)adjoint  orbit is fibered over $\mathbb{CP}(1)\simeq \mathbb{S}^2$ with each fiber isomorphic to the hyperbolic disc $\mathbb{H}^2$. 
In this paper, we are interested in the complex structure inherited on the hyperbolic disc $\mathbb{H}^2$ by the hyperk\"ahler extension of the $2$-sphere. Contrary to what is generally believed, we show that it differs from the natural complex structure of $\mathbb{H}^2\subset \mathbb{C}$ inherited from its embedding in $\mathbb{C}$. In other words, the embedding of $\mathbb{H}^2$ with its Hermitian-symmetric structure into the hyperk\"ahler manifold $T^*\mathbb{CP}(1)$ is not holomorphic.

\keywords{hyperk\"ahler manifold  \and K\"ahler structures \and symplectic structures \and coadjoint orbits \and Hermitian symmetric spaces.}
\end{abstract}
\section{Introduction}

The states of quantum systems are described by wave functions which correspond to probability densities or elements of the complex projective space of a Hilbert space. In classical probability theory, the Fisher information metric can be used to define a   distance between probability densities. The link between the Fubini-Study metric and the Fisher Quantum Information metric tensor is given for instance in \cite{Marmo}. The complex projective space $\mathbb{CP}(1)$ with its Fubini-Study K\"ahler metric is a toy example for Quantum mechanics. 
In this paper, we consider the hyperk\"ahler extension of the complex projective space $\mathbb{CP}(1)$, which can be identified either with $T^*\mathbb{CP}(1)$ or with a complex coadjoint orbit of $SL(2, \mathbb{C})$, and we study the complex structure induced by it on the hyperbolic disc $\mathbb{H}^2$.

\subsection{Definition of hyperk\"ahler metrics}

A hyperk\"ahler manifold can be defined as  a smooth manifold $\mathcal{M}$ endowed with
\begin{itemize}
\item[$\bullet$] one Riemannian metric $\mathrm{g}$,
\item[$\bullet$] three (formally integrable) complex structures $I_1$, $I_2$, $I_3$ satisfying the same identities as the quaternions, in particular  $I_1 I_2 I_3 = -1$,
\item[$\bullet$] three symplectic forms $\omega_1$, $\omega_2$, $\omega_3$ linked to the Riemannian metric $\mathrm{g}$ and the complex structures $I_i$ by  $\omega_i(X, Y) = g(X, I_i Y) $. 
\end{itemize}
Note that, in this setting, $\mathcal{M}$ endow with $I_1$ and $\omega_\mathbb{C}:= \omega_2 + i\omega_3$ is a  complex symplectic manifold. In fact, for any $(a,b,c)\in\mathbb{S}^2$, $I_{(a,b,c)} = a I_1 + b I_2 + c I_3$ is a complex structure on $\mathcal{M}$. Consequently, any hyperk\"ahler manifold admits a family of complex symplectic structures indexed by the two-sphere of complex structures defined by $I_1, I_2, I_3$. Any hyperk\"ahler manifold has a real dimension multiple of $4$.

\subsection{Hyperk\"ahler metrics on cotangent spaces}
The first non-flat example of hyperk\"ahler metric goes back to 1978 with the work of Eguchi and Hanson \cite{Eguchi_Hanson} who constructed a hyperk\"ahler metric, now called Eguchi-Hanson metric, on the cotangent space  of the complex projective line. Note that the real dimension of $T^*\mathbb{CP}(1)$ is 4, which is the minimal dimension for a hyperk\"ahler manifold. The construction of Eguchi and Hanson was then generalized by Calabi \cite{Calabi} to the cotangent space of the complex projective space of any finite dimension, and in \cite{Tum3} to the complex projective space of an infinite-dimensional Hilbert space.  Existence of hyperk\"ahler metrics, for instance on K3 surfaces, can be deduced from Calabi-Yau theorem \cite{Yau}.  However explicit formulas for hyperk\"ahler metrics are rare. As far as we know, none of the hyperk\"ahler metrics known on compact manifolds are explicit.  In the particular case of cotangent bundles of Hermitian-symmetric spaces,  Biquard and Gauduchon \cite{BG1} gave an explicit formula for the hyperk\"ahler metric, generalizing the case of $T^*\mathbb{CP}(n)$. 
This construction  has been used recently to compute  the Gromov width and the Hofer--Zehnder capacity  of  magnetically twisted tangent bundles of Hermitian symmetric spaces \cite{Bimmermann}. For the computation of Hofer--Zehnder capacities on other types of manifolds see \cite{Bimmermann2,Bimmermann3,Bimmermann_Maier}. Note that for Hermitian-symmetric space of non-compact type, the open set on which the hyperk\"ahler metric is defined is linked to Ma\~{n}\'e critical value \cite{Bimmermann}.

\subsection{Isomorphism between cotangent spaces and complex coadjoint orbits}
It is a non trivial fact that cotangent spaces of Hermitian-symmetric spaces of compact type are actually isomorphic to complex coadjoint orbits. This is a consequence of Mostow decomposition theorem \cite{Mostow,Tum1,Tum_Lar,Barbaresco}, which Biquard and Gauduchon \cite{BG2,BG3} used to derive further formulas for the hyperk\"ahler structures mentioned above. 
In particular, their approach illustrates that the natural complex symplectic structure of the cotangent bundle of a Hermitian-symmetric space of compact type and the Kirillov-Kostant-Souriau complex symplectic form of the corresponding complex coadjoint orbit are in fact compatible and form a  hyperk\"ahler metric. The expression of this hyperk\"ahler metric from the point of view of the complex coadjoint orbit will be recalled in section~\ref{hyperkahler}. It uses the fact that  the complex coadjoint orbit is fibered over the Hermitian-symmetric space of compact type, each fiber being isomorphic to the dual Hermitian-symmetric space of non-compact type.  This fibration exists for more general orbits than the Hermitian-symmetric ones, as was shown in \cite[section 2.1]{Tum1} (see section~\ref{section_fibre} below). However, as far as we know, this fibration was not used to understand hyperk\"ahler structures on  more general coadjoint orbits. In the Hermitian-symmetric case, containing in particular Grassmannians and Siegel domains, the constructions of hyperk\"ahler metrics mentioned above have been generalized to the infinite-dimensional setting  in \cite{Tum2,Tum3,Tum4}. 

\subsection{Fibre bundle structure of complex coadjoint orbits}\label{section_fibre}
Let $G$ be a semi-simple connected compact Lie group with Lie algebra $\mathfrak{g}$, $G^{\mathbb{C}}$ the connected Lie group with Lie algebra $\mathfrak{g}^\mathbb{C}:= \mathfrak{g}\oplus i \mathfrak{g}$. Using the Killing form, one can identify adjoint and coadjoint orbits of $G^\mathbb{C}$.
Let $x$ be an element in $\mathfrak{g}$. Denote by $\mathcal{O}_x$ the adjoint orbit of $x$ under $G$ and by $\mathcal{O}^{\mathbb{C}}_x$ the adjoint orbit of $x$ under $G^\mathbb{C}$. Let $p : T\mathcal{O}_x \twoheadrightarrow \mathcal{O}_x$ be the canonical projection of the tangent space $T\mathcal{O}_x$ onto the compact orbit $\mathcal{O}_x$. We recall the following theorem, consequence of Mostow decomposition theorem of $G^{\mathbb{C}}$, \cite[Theorem 2.1]{Tum1}:

\begin{theorem}[\cite{Tum1}]\label{theorem_orbit}
 There exists a $G$-equivariant projection $\pi: \mathcal{O}^\mathbb{C}_x \twoheadrightarrow \mathcal{O}_x$ and a $G$-equivariant diffeomorphism $\Phi: T\mathcal{O}_x \rightarrow \mathcal{O}_x^{\mathbb{C}}$ which commutes with the projections $p$ and~$\pi$.
\end{theorem}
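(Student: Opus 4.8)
The plan is to realise both orbits as homogeneous spaces and then to extract the statement from the Mostow (polar) decomposition of $G^{\mathbb{C}}$ relative to the complexified stabiliser of $x$. First I would fix the centraliser $H := \{g\in G : \Ad_g x = x\}$, with Lie algebra $\mathfrak{h} = \{\xi\in\mathfrak{g} : [\xi,x]=0\}$, so that $\mathcal{O}_x \simeq G/H$. Since $x$ lies in the compact form $\mathfrak{g}$, the operator $\ad_x$ is semisimple, hence its centraliser in $\mathfrak{g}^{\mathbb{C}}$ is exactly $\mathfrak{h}^{\mathbb{C}} = \mathfrak{h}\oplus i\mathfrak{h}$; setting $H^{\mathbb{C}} := \{g\in G^{\mathbb{C}} : \Ad_g x = x\}$ one gets $\mathcal{O}^{\mathbb{C}}_x \simeq G^{\mathbb{C}}/H^{\mathbb{C}}$, and $H^{\mathbb{C}}$ is a reductive subgroup, stable under the conjugation $\sigma$ of $G^{\mathbb{C}}$ with respect to $G$, whose own Cartan decomposition reads $H^{\mathbb{C}} = H\exp(i\mathfrak{h})$. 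Using the Killing form of $\mathfrak{g}$ I would split $\mathfrak{g} = \mathfrak{h}\oplus\mathfrak{m}$ with $\mathfrak{m} := \mathfrak{h}^{\perp}$ an $\Ad_H$-invariant complement, so that the isotropy representation identifies the tangent bundle as the associated bundle $T\mathcal{O}_x \simeq G\times_H \mathfrak{m}$, with bundle projection $p([u,\xi]) = \Ad_u x$.

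The heart of the argument is the map
\[
\Psi : G\times_H i\mathfrak{m} \longrightarrow G^{\mathbb{C}}/H^{\mathbb{C}},\qquad [u, i\xi]\longmapsto u\exp(i\xi)\,H^{\mathbb{C}},
\]
where $H$ acts on $G\times i\mathfrak{m}$ by $h\cdot(u,i\xi) = (uh^{-1}, i\Ad_h\xi)$. I would first check $\Psi$ is well defined: replacing $(u,i\xi)$ by $(uh^{-1}, i\Ad_h\xi)$ leaves the coset unchanged because $\exp(i\Ad_h\xi) = h\exp(i\xi)h^{-1}$ and $h\in H\subset H^{\mathbb{C}}$. The Mostow decomposition theorem then supplies precisely what is needed: every coset in $G^{\mathbb{C}}/H^{\mathbb{C}}$ admits a representative $u\exp(i\xi)$ with $u\in G$ and $\xi\in\mathfrak{m}$ (surjectivity of $\Psi$), and the resulting class $[u,i\xi]$ is unique (injectivity), the whole assignment being smooth with smooth inverse. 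Granting this, $\Phi$ is obtained by composing $\Psi$ with the $H$-equivariant identification $\mathfrak{m}\xrightarrow{\sim} i\mathfrak{m}$, $\xi\mapsto i\xi$, and the projection is defined by $\pi(u\exp(i\xi)H^{\mathbb{C}}) := \Ad_u x$, i.e. as the map reading off the $G$-part of the Mostow factorisation; its well-definedness is exactly the uniqueness clause (if $u\exp(i\xi)H^{\mathbb{C}} = u'\exp(i\xi')H^{\mathbb{C}}$ then $u' = uh^{-1}$ with $h\in H$, so $\Ad_{u'}x = \Ad_u x$).

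It then remains to verify the equivariance and compatibility, which are formal once $\Phi$ and $\pi$ are in place. Both maps are built solely from left multiplication by elements of $G^{\mathbb{C}}$ and from the associated-bundle structure, so they intertwine the left $G$-actions (the $\Ad$-action $\Ad_g$ on $\mathcal{O}_x^{\mathbb{C}}$ corresponding to $gH^{\mathbb{C}}\mapsto agH^{\mathbb{C}}$ and to $[u,\xi]\mapsto [au,\xi]$ on $T\mathcal{O}_x$): thus $\Phi$ is $G$-equivariant and $\pi$ is $G$-equivariant by construction. Finally $\pi\circ\Phi = p$ is immediate, since $\pi(\Phi([u,\xi])) = \pi(u\exp(i\xi)H^{\mathbb{C}}) = \Ad_u x = p([u,\xi])$.

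I expect the main obstacle to be the diffeomorphism claim for $\Psi$, that is, the translation of Mostow's theorem into the global statement that $G^{\mathbb{C}} = G\exp(i\mathfrak{m})H^{\mathbb{C}}$ with transverse data $[u,i\xi]$ unique and smoothly varying. Surjectivity is the existence of the factorisation, while injectivity rests on the uniqueness of the polar decomposition $G^{\mathbb{C}} = G\exp(i\mathfrak{g})$ combined with the orthogonal splitting $i\mathfrak{g} = i\mathfrak{h}\oplus i\mathfrak{m}$ adapted to $H^{\mathbb{C}}$. The delicate points to handle carefully are that $\ad_x$ is semisimple, so that $H^{\mathbb{C}}$ genuinely equals the complexification $H\exp(i\mathfrak{h})$ (and not some larger subgroup), and that $\Ad_H$ preserves $\mathfrak{m}$, so that both the associated bundle $G\times_H\mathfrak{m}$ and the $H$-action in $\Psi$ make sense; both follow from $H$ preserving the Killing form and stabilising $\mathfrak{h}$.
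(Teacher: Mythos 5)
Your proposal is correct and follows essentially the same route as the paper, which states this result as a consequence of the Mostow decomposition $G^{\mathbb{C}}=G\,\exp(i\mathfrak{m})\,\exp(i\mathfrak{h})$ (citing [Tum1] for the general case) and carries out exactly your construction of $\pi$ and $\Phi$ concretely for $SL(2,\mathbb{C})$ in Theorem~\ref{cp}: well-definedness of $\pi$ from uniqueness of the decomposition plus the fact that $\exp(i\mathfrak{h})\subset H^{\mathbb{C}}$ stabilizes $x$, and $\Phi$ via the associated-bundle identification $T\mathcal{O}_x\simeq G\times_H\mathfrak{m}$. One small imprecision: in your closing paragraph the injectivity should be attributed to the uniqueness clause of the three-factor Mostow decomposition itself, which is a strictly finer statement than the Cartan polar decomposition $G^{\mathbb{C}}=G\exp(i\mathfrak{g})$ combined with the orthogonal splitting $i\mathfrak{g}=i\mathfrak{h}\oplus i\mathfrak{m}$, since $\exp(i\xi+i\eta)\neq\exp(i\xi)\exp(i\eta)$ in general.
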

Using the explicit Mostow decomposition of $SL(2, \mathbb{C})$ given in section~\ref{Mostow}, we give the concrete illustration of previous theorem  in the case where $\mathcal{O}_x \simeq \mathbb{CP}(1)$, see Theorem~\ref{cp}.

\subsection{Hyperk\"ahler structures on complexifications of Hermitian-symmetric orbits of compact type}\label{hyperkahler}
The following theorem goes back to \cite{BG1,BG2,BG3} and has been generalized to the case of affine coadjoint orbits of $L^*$-groups in \cite{Tum3,Tum4}. In the setting of previous section, consider a Hermitian-symmetric coadjoint orbit of compact type $\mathcal{O}_x$, $x\in \mathfrak{g}$. This means that the Lie algebra of $\mathfrak{g}$ decomposes into $\mathfrak{g} = \mathfrak{t}_x \oplus \mathfrak{m}_x$, with the following commutation relations $[\mathfrak{t}_x, \mathfrak{m}_x]\subset \mathfrak{m}_x$ and $[\mathfrak{m}_x, \mathfrak{m}_x]\subset \mathfrak{t}_x$, where $\mathfrak{t}_x$ is the stabilizer of $x\in\mathfrak{g}$ for the adjoint action and $\mathfrak{m}_x$ its orthogonal complement in $\mathfrak{g}$ with respect to the Killing form.
Note that the operator $\textrm{ad}_x:=[x, \cdot]$ is anti-symmetric with respect to the Killing form, hence $-\textrm{ad}^2_x$ is symmetric and diagonalizes in an orthonormal basis with respect to the Killing from. Since for $t \in \mathfrak{t}_x$, $\textrm{ad}_t:=[t, \cdot]$ and $\textrm{ad}_x$ commute, the eigenspaces of $-\textrm{ad}^2_x$ are stable by the adjoint action of the isotropy $\mathfrak{t}_x$.
In what follows, we will suppose that $\mathcal{O}_x$ is irreducible, i.e. the isotropy acts in an irreducible way on $T_x\mathcal{O}_x$. 
In this case, there exists a constant $\alpha$ such that $\textrm{ad}_x^2 = -\alpha^2$ on $\mathfrak{m}_x$. In particular, the complex structure of the Hermitian-symmetric space $\mathcal{O}_x$ is given at $T_x\mathcal{O}_x$ by the endomorphism $I_x := \frac{1}{\alpha}\textrm{ad}_x$.

Let $p : T\mathcal{O}_x \twoheadrightarrow \mathcal{O}_x$ be the canonical projection of the tangent space $T\mathcal{O}_x$ onto the compact orbit $\mathcal{O}_x$ and $\pi: \mathcal{O}^{\mathbb{C}}_x \rightarrow \mathcal{O}_x$ the $G$-equivariant projection from the complex orbit onto the compact one given in Theorem~\ref{theorem_orbit}. Endow $\mathfrak{g}^\mathbb{C}$ with  the hermitian scalar product defined by 
\begin{equation}\label{pairing}
\langle \mathfrak{a}, \mathfrak{b} \rangle = \kappa(\mathfrak{a}^*, \mathfrak{b}),~\mathfrak{a}, \mathfrak{b}\in\mathfrak{g}^{\mathbb{C}},
\end{equation}
where $\mathfrak{a}^* = -\mathfrak{a}$ if $\mathfrak{a}\in\mathfrak{g}$ and $\mathfrak{a}^* = \mathfrak{a}$ if $\mathfrak{a}\in i\mathfrak{g}$, and $ \kappa(\cdot,\cdot)$ denotes the Killing form.
We will need to distinguish an element $\mathfrak{c} \in \mathfrak{g}^\mathbb{C}$ and the vector field $X^\mathfrak{c}$ that $\mathfrak{c}$ generates on $\mathcal{O}^\mathbb{C}_x$ by infinitesimal adjoint action: $X^\mathfrak{c}(y) = [\mathfrak{c}, y]$.

\begin{theorem}[\cite{BG1,BG2,BG3}]\label{hyp}
\begin{enumerate}
\item The complex adjoint orbit $\mathcal{O}_{x}^{\mathbb{C}}$ admits a
$G$-invariant hyperk\"ahler structure compatible with the complex
symplectic form $\omega_{\mathbb{C}}$ of Kirillov-Kostant-Souriau and
extending the natural K\"ahler structure of the
Hermitian-symmetric orbit of compact type
$\mathcal{O}_{x}$. 
\item The K\"ahler form $\omega_{1}$ associated with
the natural complex structure  $I_1 := i$ of the complex orbit $\mathcal{O}^{\mathbb{C}}_{x}$ is given by
$\omega_{1} = dd^{c} K$, where the potential $K$ has the following
expression
\begin{equation}\label{potentiel}
K(y) = \alpha \Re \langle y, \pi(y) \rangle, \textrm{where }y\in \mathcal{O}_{x}^{\mathbb{C}}.
\end{equation} 
\item The explicit expression
of the symplectic form $\omega_{1}$ is the following
$$
\begin{array}{l}
\omega_{1}(X^{\mathfrak{c}+ i \mathfrak{c}'}, X^\mathfrak{d + i
\mathfrak{d}'}) = \alpha \Im \left( \langle X^{i\mathfrak{c}'},
\pi_{*}(X^{\mathfrak{d}})\rangle - \langle X^{i\mathfrak{d}'},
\pi_{*}(X^{\mathfrak{c}})\rangle \right).
\end{array}
$$
\item The Riemannian metric is given by
$$
\begin{array}{l}
 \textrm{g}(X^{\mathfrak{c}}, X^{\mathfrak{d}})  =
\textrm{g}(X^{i\mathfrak{c}}, X^{i\mathfrak{d}}) = c \Re \langle
X^{\mathfrak{c}}(y), X^{\mathfrak{d}}(\pi(y)) \rangle,~~~~~~~~
\textrm{g}(X^{\mathfrak{c}}, X^{i\mathfrak{d}}) = 0,
\end{array}
$$
where $\mathfrak{c}$, $\mathfrak{c}'$, $\mathfrak{d}$ and
$\mathfrak{d}'$ belong to $\mathfrak{m}_{\pi(y)}$. 
\item The complex
structure $I_2$, extending the complex structure $I$ of the Hermitian-symmetric space of compact  type $\mathcal{O}_x$,  is given at $y\in \pi^{-1}(x)$ by
\begin{equation}\label{I2}
I_{2}X^{\mathfrak{d}} = X^{\frac{1}{\alpha}\textrm{ad}_x{\mathfrak{d}}},~~~~~~~~ I_{2}X^{i\mathfrak{d}} =
-X^{\frac{1}{\alpha}\textrm{ad}_x{i\mathfrak{d}}},
\end{equation}
where $\mathfrak{c}$ and $\mathfrak{d}$ belong to
$\mathfrak{m}_x$ and where $\alpha$ is such that $\textrm{ad}_x^2 = -\alpha^2$ on $\mathfrak{m}_x$.
\end{enumerate}
\end{theorem}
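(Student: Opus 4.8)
The plan is to take as given the two canonical ingredients on the complex orbit $\mathcal{O}_x^{\mathbb{C}}$ --- the natural complex structure $I_1 = i$ coming from its being an orbit of $G^{\mathbb{C}}$, and the Kirillov--Kostant--Souriau holomorphic symplectic form $\omega_{\mathbb{C}}$ --- and to manufacture the missing K\"ahler form $\omega_1$ (together with the metric $\mathrm{g}$ and the remaining complex structures) from the potential $K$ of~\eqref{potentiel}. Throughout I would exploit $G$-invariance: the Killing form and the Hermitian pairing~\eqref{pairing} are $\mathrm{Ad}(G)$-invariant and the projection $\pi$ of Theorem~\ref{theorem_orbit} is $G$-equivariant, so every object defined below is automatically $G$-invariant. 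Consequently it suffices to verify all pointwise identities along the single fibre $\pi^{-1}(x)$, where the tangent space is spanned by the generators $X^{\mathfrak{c}}, X^{i\mathfrak{c}}$ with $\mathfrak{c}\in\mathfrak{m}_x$ and where $\mathrm{ad}_x$ acts with the single eigenvalue relation $\mathrm{ad}_x^2 = -\alpha^2$; this collapses the whole computation to scalar identities and is the reason irreducibility is assumed.

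I would first compute $\omega_1 := dd^c K$ explicitly. The delicate point, and the source of the paper's main observation, is that $\pi$ is \emph{not} holomorphic, so in $K(y) = \alpha\,\Re\langle y, \pi(y)\rangle$ the factor $\pi(y)$ carries a genuine antiholomorphic dependence. I would differentiate the explicit Mostow decomposition of section~\ref{Mostow} to obtain $\pi_{*}$ along $\pi^{-1}(x)$, substitute into $dd^c = 2i\,\partial\bar\partial$, and evaluate on the generators $X^{\mathfrak{c}+i\mathfrak{c}'}$, $X^{\mathfrak{d}+i\mathfrak{d}'}$; matching the antisymmetric pairing of $X^{i\mathfrak{c}'}$ against $\pi_{*}(X^{\mathfrak{d}})$ should reproduce the formula in part~(3). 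Closedness of $\omega_1$ is then immediate since it is exact, and positivity of the $(1,1)$-form $\omega_1$ --- read off from $\mathrm{ad}_x^2 = -\alpha^2$ --- shows that the metric $\mathrm{g}$ fixed by $\omega_1(X,Y) = \mathrm{g}(X, I_1 Y)$ is Riemannian; expanding it on the generators gives the expressions of part~(4), including the orthogonality $\mathrm{g}(X^{\mathfrak{c}}, X^{i\mathfrak{d}}) = 0$.

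Next I would define $I_2$ by~\eqref{I2} on $\pi^{-1}(x)$, extend it $G$-equivariantly, and set $I_3 := I_1 I_2$. Because $\tfrac{1}{\alpha}\mathrm{ad}_x$ squares to $-1$ on $\mathfrak{m}_x$ and the sign pattern of~\eqref{I2} makes $I_2$ anticommute with $I_1 = i$, the relations $I_2^2 = -1$, $I_1 I_2 = -I_2 I_1$ and hence $I_1 I_2 I_3 = -1$ follow pointwise, and the same eigenvalue computation shows $\mathrm{g}$ is Hermitian for $I_2$ and $I_3$. For the symplectic side I would set $\omega_2 := \Re\,\omega_{\mathbb{C}}$ and $\omega_3 := \Im\,\omega_{\mathbb{C}}$; these are closed because the KKS form $\omega_{\mathbb{C}}$ is. To conclude it then suffices, by the standard criterion that a metric carrying a compatible quaternionic triple with three \emph{closed} fundamental two-forms is automatically hyperk\"ahler (so that the $I_i$ are integrable), to check the single compatibility $\omega_2(\cdot,\cdot) = \mathrm{g}(\cdot, I_2\,\cdot)$ and $\omega_3(\cdot,\cdot) = \mathrm{g}(\cdot, I_3\,\cdot)$, equivalently $\omega_2 + i\omega_3 = \omega_{\mathbb{C}}$.

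This last compatibility is the main obstacle: the metric $\mathrm{g}$ has been built \emph{solely} out of $\omega_1$ and $I_1$ via the potential, while $I_2, I_3$ and $\omega_2, \omega_3$ come from the independent KKS data, and one must show the two are linked by $\omega_i = \mathrm{g}(\cdot, I_i\,\cdot)$. Concretely this is a primitivity / Monge--Amp\`ere-type relation tying $\omega_1$ to $\omega_{\mathbb{C}}$, and it is precisely here that irreducibility enters decisively: the single-eigenvalue identity $\mathrm{ad}_x^2 = -\alpha^2$ reduces the matching to one scalar normalization, which also pins down the constant $c$ appearing in part~(4). Once this identity is secured on $\pi^{-1}(x)$, $G$-invariance propagates it to all of $\mathcal{O}_x^{\mathbb{C}}$, the quaternionic triple and closed forms satisfy the criterion, and the existence statement of part~(1) --- a $G$-invariant hyperk\"ahler structure compatible with $\omega_{\mathbb{C}}$ and extending the K\"ahler structure of $\mathcal{O}_x$ --- follows.
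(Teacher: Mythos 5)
First, a point of comparison: the paper does not prove this theorem at all. It is stated with the attribution \cite{BG1,BG2,BG3} and imported wholesale from Biquard--Gauduchon; the surrounding text only explains how the ingredients ($\pi$, the pairing \eqref{pairing}, the constant $\alpha$) are to be read. So there is no in-paper proof to measure you against, and the relevant question is whether your sketch would stand on its own. Its architecture is the right one, and is essentially the Biquard--Gauduchon strategy: build $\omega_1$ from the potential $K$, take $\omega_2+i\omega_3$ to be the Kirillov--Kostant--Souriau form, define $I_2$ by \eqref{I2} and $I_3=I_1I_2$, reduce all verifications to the fibre $\pi^{-1}(x)$ by $G$-equivariance (this reduction is legitimate, since $G\cdot\pi^{-1}(x)=\mathcal{O}_x^{\mathbb{C}}$), and invoke the standard criterion that an almost hyper-Hermitian structure with three closed fundamental two-forms is hyperk\"ahler, so that integrability of the $I_i$ comes for free. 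The algebraic checks you do carry out ($I_2^2=-1$, $I_1I_2=-I_2I_1$ from the sign pattern of \eqref{I2} and $\mathrm{ad}_x^2=-\alpha^2$) are correct.

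The gap is that everything which actually carries the content of parts (1), (3) and (4) is announced rather than performed. Three computations are deferred: (i) the evaluation of $dd^cK$, which requires an explicit formula for $\pi_*$ along the fibre --- i.e.\ differentiating the Mostow decomposition, which is nontrivial and which the paper itself only carries out for $SL(2,\mathbb{C})$ in Proposition~\ref{Mostow_SL}; (ii) positive-definiteness of $\mathrm{g}$, which is \emph{not} ``read off'' from $\mathrm{ad}_x^2=-\alpha^2$ --- it is exactly the point where the distinction between compact and non-compact type, and hence the domain on which the metric exists (cf.\ the remark on the Ma\~{n}\'e critical value in the introduction), enters, and it requires controlling the fibre variable $\beta$, not just the eigenvalue of $\mathrm{ad}_x$; and (iii) the identity $\omega_{\mathbb{C}}=\mathrm{g}(\cdot,I_2\cdot)+i\,\mathrm{g}(\cdot,I_3\cdot)$ linking the potential-built metric to the KKS data, which you yourself flag as ``the main obstacle'' and then assert without argument that irreducibility reduces it to ``one scalar normalization.'' That last claim is too quick: irreducibility gives a single eigenvalue for $\mathrm{ad}_x^2$ on $\mathfrak{m}_x$, but at a general point $y\in\pi^{-1}(x)$ the relevant operator is built from both $\mathrm{ad}_x$ and $\mathrm{ad}_y$, and the matching is a genuine fibrewise computation (it is where the functional form of the potential is actually tested, and where an incorrect potential would fail). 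Until (i)--(iii) are executed, the existence statement in part (1) and the formulas in parts (3)--(4) are not established; what you have is a correct plan for the Biquard--Gauduchon proof, not a proof.
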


\subsection{Contributions}
 
  \subsubsection{Summary:}
 Mostow decomposition theorem leads to an identification of $T^*\mathbb{CP}(1)$ with the space $\mathcal{O}_x^{\mathbb{C}}$ consisting of pairs $(\ell_1, \ell_2)$ of complex lines in $\mathbb{C}^2$ such that $\ell_1 \oplus \ell_2 = \mathbb{C}^2$. This later space is a coadjoint orbit of ${SL}(2, \mathbb{C})$ and is fibered over $\mathbb{CP}(1)$  with fibers isomorphic with the hyperbolic disc $\mathbb{H}^2$. The hyperk\"ahler structure on $\mathcal{O}_x^{\mathbb{C}}$ defined in Theorem~\ref{hyp} gives a complex structure $I_2$ which preserves $\mathbb{H}^2$ but differs from the natural complex structure on $\mathbb{H}^2$ induced by its embedding in $\mathbb{C}$.
 
 \subsubsection{Key points:}
  \begin{itemize}  
  \item[$\bullet$] In Proposition~\ref{non-compact}, an explicit parameterization of the orbit of non-compact type $\mathcal{O}_x^{n.c.}$, embedded in the complex coadjoint orbit $\mathcal{O}_x^{\mathbb{C}}$, is given, as well as a diffeomorphism with $\mathbb{H}^2$.
  \item[$\bullet$] In Proposition~\ref{Mostow_SL},  Mostow decomposition of $SL(2, \mathbb{C})$ is computed explicitly.
  \item[$\bullet$] In Theorem~\ref{cp}, the fibration of $\mathcal{O}_x^{\mathbb{C}}$ over $\mathcal{O}_x$ is given, based on Mostow decomposition of $SL(2, \mathbb{C})$. This allows to define the  $SU(2)$-equivariant diffeomorphism $\Phi$ between $\mathcal{O}_x^{\mathbb{C}}$ and $T\mathbb{CP}(1)\simeq T^*\mathbb{CP}(1)$.
  \item[$\bullet$] In Theorem~\ref{non-holo}, the complex structure induced on $\mathbb{H}^2$ by the complex structure $I_2$ of $\mathcal{O}_x^{\mathbb{C}}$ defined by \eqref{I2} is computed. It differs from the natural complex structure of $\mathbb{H}^2$ as open subset of $\mathbb{C}$. 
  \item[$\bullet$] A visualization of the complex structure induced by $I_2$ on $\mathbb{H}_2$ is given in Figure~\ref{fig}.
  \end{itemize}

\section{The $2$-sphere and the hyperbolic disc are living in the same coadjoint orbit of $SL(2, \mathbb{C})$}

\subsection{Presentation of the protagonists and their relatives}

In the following, we will identify the $2$-sphere with the complex projective line $\mathbb{CP}^1$  endowed with its K\"ahler metric known as the Fubini-Study metric.  The corresponding Riemannian metric is 4 times the Riemannian metric of the round sphere of radius 1,
and the complex structure is the one of the Riemannian sphere $\mathbb{S}^2 \simeq \mathbb{C}\cup\{\infty\} $.
The hyperbolic Poincare disc will be denoted by $\mathbb{H}^2 = \{ z\in \mathbb{C}, |z|<1\}$.
For the remainder of the paper, we will restrict our attention to the Lie group 
\[
G = SU(2) := \left\{ \left(\begin{array}{cc} a & -\bar{b}\\b & \bar{a} \end{array}\right), |a|^2 + |b|^2 = 1, a, b\in\mathbb{C}\right\},
\]
with Lie algebra
$
\mathfrak{g} = \mathfrak{su}(2) := \left\{  \left(\begin{array}{cc} i\alpha & -\bar{\beta}\\\beta & -i\alpha \end{array}\right),\alpha\in\mathbb{R}, \beta\in\mathbb{C}\right\}.
$
It is a maximal compact subgroup of the complex Lie group
\[
G^{\mathbb{C}}= SL(2, \mathbb{C}) := \left\{ \left(\begin{array}{cc} a & b\\c & d \end{array}\right), ad-bc = 1, a,b,c,d \in\mathbb{C} \right\},
\] 
with Lie algebra 
$
\mathfrak{g}^\mathbb{C} = \mathfrak{sl}(2, \mathbb{C}) :=  \left\{ \left(\begin{array}{cc} a & b\\c & -a \end{array}\right),  a,b,c \in\mathbb{C} \right\}.
$

\subsection{Compact (co-)adjoint orbit $\mathcal{O}_x$, complex (co-)adjoint orbit $\mathcal{O}^\mathbb{C}_x$ and non-compact (co-)adjoint orbit $\mathcal{O}^{n.c.}_x$}
Consider the element
$$x := \left(\begin{array}{cc} i & 0\\0 & -i \end{array}\right)\in \mathfrak{su}(2).$$
Let $G$ and $G^{\mathbb{C}}$ act by adjoint action on their Lie algebras, and denote by $\mathcal{O}_{x}$ the compact adjoint orbit of $x$ under $G$, and $\mathcal{O}_x^{\mathbb{C}}$ the complex adjoint orbit of $x$ under $G^\mathbb{C}$. One has:
\[
\begin{array}{cll}
\mathcal{O}_x & \simeq & \mathbb{CP}^1 \\
\left(\begin{array}{cc} a & -\bar{c}\\c & \bar{a} \end{array}\right) \left(\begin{array}{cc} i & 0\\0 & -i \end{array}\right) \left(\begin{array}{cc} a & -\bar{c}\\c & \bar{a} \end{array}\right)^{-1} & \mapsto &  \textrm{Eig}(i)  = \mathbb{C}\left(\begin{array}{c}a \\c\end{array}\right),
\end{array}
\]
and similarly
\[
\begin{array}{cll}
\mathcal{O}^\mathbb{C}_x & \simeq & \left\{ (\ell_1, \ell_2)\in \mathbb{CP}^1\times \mathbb{CP}^1, \ell_1 \oplus \ell_2 = \mathbb{C}^2\right\}\\
\left(\begin{array}{cc} a & b\\c & d \end{array}\right) \left(\begin{array}{cc} i & 0\\0 & -i \end{array}\right) \left(\begin{array}{cc} a & b\\c & d \end{array}\right)^{-1} & \mapsto &  \left(\textrm{Eig}(i) = \mathbb{C}\left(\begin{array}{c}a \\c\end{array}\right), \textrm{Eig}(-i) = \mathbb{C}\left(\begin{array}{c}b \\d\end{array}\right) \right).
\end{array}
\]
The stabilizers of $x$ for the adjoint actions of $G$ and $G^{\mathbb{C}}$ are respectively:
\[
\begin{array}{l}
 \operatorname{Stab}_{SU(2)}(x) =  U(1) =  \left\{ \left(\begin{array}{cc} e^{i\theta} & 0\\0 & e^{-i\theta} \end{array}\right), \theta \in \mathbb{R}\right\},\\
\operatorname{Stab}_{SL(2, \mathbb{C})}(x) = \mathbb{C}^* = \left\{ \left(\begin{array}{cc} a & 0\\0 & \frac{1}{a} \end{array}\right), a\in \mathbb{C}^*\right\}.\\
\end{array}
\]
Denote by $\mathfrak{t}$ the Lie algebra of $ \operatorname{Stab}_{SU(2)}(x)$. Then the Lie algebra of $\operatorname{Stab}_{SL(2, \mathbb{C})}(x) $ is $\mathfrak{t}^\mathbb{C} = \mathfrak{t} \oplus i \mathfrak{t}$:
$$
\begin{array}{l}
\mathfrak{t} = \left\{ \left(\begin{array}{cc} {i\theta} & 0\\0 & {-i\theta} \end{array}\right), \theta \in \mathbb{R}\right\},\\
\mathfrak{t}^\mathbb{C} =  \mathfrak{t} \oplus i \mathfrak{t} = \left\{ \left(\begin{array}{cc} a & 0\\0 & -a \end{array}\right), a \in \mathbb{C}\right\}.\\
\end{array}
$$
Let us introduce the orthogonal  $\mathfrak{m}$ of $\mathfrak{t}$ in $\mathfrak{su}(2)$ with respect to the Killing form $K(A, B) := \Tr(\ad(A)\ad(B)) = 4 \Tr AB$:
$$
\mathfrak{m} := \left\{ \left(\begin{array}{cc} 0  & -\bar{c}\\c & 0\end{array}\right), c \in \mathbb{C}\right\}.
$$
Then
$
\mathfrak{g} = \mathfrak{su}(2) = \mathfrak{t}\oplus \mathfrak{m}$
 and 
$\mathfrak{g}^\mathbb{C} = \mathfrak{sl}(2, \mathbb{C}) = \mathfrak{t}\oplus \mathfrak{m} \oplus i \mathfrak{t} \oplus i \mathfrak{m}, 
$
with the commutation relations $[\mathfrak{t}, \mathfrak{m}] \subset \mathfrak{m}$ and $[\mathfrak{m}, \mathfrak{m}] \subset \mathfrak{t}$. Denote by 
$$
\mathfrak{g}^{n.c.} := \mathfrak{su}(1, 1) = \mathfrak{t}\oplus i\mathfrak{m} = \left\{ \left(\begin{array}{cc} i\theta  & \bar{\beta}\\\beta & -i\theta\end{array}\right), \theta\in \mathbb{R}, \beta \in \mathbb{C}\right\}
$$
 the Lie algebra of the non-compact semi-simple Lie group $SU(1,1)$:
 \begin{equation}\label{su11}
 SU(1,1) := \left\{ \left(\begin{array}{cc} a & b\\\bar{b} & \bar{a} \end{array}\right), a, b \in \mathbb{C},  |a|^2 - |b|^2 = 1 \right\}.
 \end{equation}
 Note that 
 $x := \left(\begin{array}{cc} i & 0\\0 & -i \end{array}\right)\in \mathfrak{su}(2)\cap \mathfrak{su}(1, 1) = \mathfrak{t},$
and that the stabilizer of $x$ under the adjoint action of $SU(1,1)$ is $U(1)$.
 It follows that the non-compact (co-)adjoint orbit $\mathcal{O}_x^{n.c.}$ of $x$ under the adjoint action of $SU(1,1)$ is diffeomorphic to the hyberbolic $2$-space presented as the hyperbolic disc $\mathbb{H}^2$. More explicitly, one has
 \begin{proposition} \label{non-compact}
 The non-compact orbit $\mathcal{O}_x^{n.c.}$ is isomorphic to the quotient space $SU(1,1)/U(1)$ hence to  $\mathbb{H}^2$, 
 \[
 \mathcal{O}_x^{n.c.} \simeq SU(1,1)/U(1) \simeq \mathbb{H}^2
 \]
and can be parameterized by $\beta\in \mathbb{C}$ as follows
 \[
  \begin{array}{cccll}
 \mathcal{O}_x^{n.c.} & \longrightarrow &  SU(1,1)/U(1) & \longrightarrow & \mathbb{H}^2 \\
 \textrm{Ad}\left(\begin{smallmatrix}\cosh |\beta| & \frac{\beta}{|\beta|} \sinh |\beta| \\ \frac{\bar{\beta}}{|\beta|} \sinh |\beta| & \cosh |\beta|\end{smallmatrix}\right)(x) &  \longmapsto &  \left[\left(\begin{smallmatrix}\cosh |\beta| & \frac{\beta}{|\beta|} \sinh |\beta| \\ \frac{\bar{\beta}}{|\beta|} \sinh |\beta| & \cosh |\beta|\end{smallmatrix}\right)\right]_{U(1)}  & \longmapsto &  \frac{\beta}{|\beta|} \tanh |\beta|,
 \end{array}
 \]
The inverse diffeomorphisms are given by
 \[
 \begin{array}{lccll}
 \mathbb{H}^2 & \longrightarrow &  SU(1,1)/U(1) & \longrightarrow &  \mathcal{O}_x^{n.c.}\\
 r e^{i\theta} &  \longmapsto &  \left[\left(\begin{smallmatrix}\cosh \frac{1}{2} \ln \left(\frac{1+r}{1-r}\right)& e^{i\theta} \sinh \frac{1}{2} \ln \left(\frac{1+r}{1-r}\right)\\ e^{-i\theta}\sinh \frac{1}{2} \ln \left(\frac{1+r}{1-r}\right) & \cosh \frac{1}{2} \ln \left(\frac{1+r}{1-r}\right) \end{smallmatrix}\right)\right]_{U(1)}  & \longmapsto &  \textrm{Ad}\left(\begin{smallmatrix}\cosh \frac{1}{2} \ln \left(\frac{1+r}{1-r}\right) & e^{i\theta} \sinh \frac{1}{2} \ln \left(\frac{1+r}{1-r}\right)\\ e^{-i\theta}\sinh \frac{1}{2} \ln \left(\frac{1+r}{1-r}\right) & \cosh \frac{1}{2} \ln \left(\frac{1+r}{1-r}\right)\end{smallmatrix}\right)(x).
 \end{array}
 \]
  \end{proposition}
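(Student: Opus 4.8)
The plan is to realize the proposition as the composition of two classical identifications together with one explicit polar parameterization. First, since the stabilizer of $x$ under the adjoint action of $SU(1,1)$ has just been identified with $U(1)$, the orbit map $g \mapsto \Ad(g)(x)$ descends to an $SU(1,1)$-equivariant diffeomorphism $SU(1,1)/U(1) \xrightarrow{\sim} \mathcal{O}_x^{n.c.}$, giving the left-hand identification. For the right-hand one I would recall that $SU(1,1)$ acts on the Poincar\'e disc $\mathbb{H}^2 = \{|z|<1\}$ by the M\"obius transformations $z \mapsto \frac{az+b}{\bar b z + \bar a}$ attached to $\left(\begin{smallmatrix} a & b \\ \bar b & \bar a\end{smallmatrix}\right)$, that the stabilizer of the centre $0$ is exactly the diagonal subgroup $U(1)$ (solving $b=0$, $|a|^2=1$ reproduces the same $U(1)$), and that the action is transitive; hence $[g] \mapsto g\cdot 0 = b/\bar a$ gives $SU(1,1)/U(1) \xrightarrow{\sim} \mathbb{H}^2$. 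It then remains to exhibit a concrete slice of $SU(1,1)/U(1)$ indexed by $\beta\in\mathbb{C}$ and to trace it through both maps.

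For the slice I would invoke the Cartan decomposition $\mathfrak{su}(1,1) = \mathfrak{t}\oplus i\mathfrak{m}$ recorded above, in which $\mathfrak{t} = \operatorname{Lie}(U(1))$ is the maximal compact part and $i\mathfrak{m}$ the noncompact complement. The associated polar decomposition states that every $g\in SU(1,1)$ factors uniquely as $g = \exp(\xi)\,k$ with $\xi\in i\mathfrak{m}$ and $k\in U(1)$, so each coset has a unique representative $\exp(\xi)$. Parameterizing $i\mathfrak{m}$ by $\xi_\beta = \left(\begin{smallmatrix} 0 & \beta \\ \bar\beta & 0\end{smallmatrix}\right)$, $\beta\in\mathbb{C}$, one computes $\xi_\beta^2 = |\beta|^2\,\id$, whence
\begin{equation*}
\exp(\xi_\beta) = \cosh|\beta|\,\id + \tfrac{\sinh|\beta|}{|\beta|}\,\xi_\beta,
\end{equation*}
which is exactly the matrix $g(\beta)$ appearing in the statement. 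Feeding $g(\beta)$ into the disc identification yields $g(\beta)\cdot 0 = \tfrac{b}{\bar a} = \frac{\beta}{|\beta|}\tanh|\beta|$, which produces the forward maps and, incidentally, confirms surjectivity onto $\mathbb{H}^2$.

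The inverse maps follow by solving $\frac{\beta}{|\beta|}\tanh|\beta| = r e^{i\theta}$: taking moduli forces $\tanh|\beta| = r$, i.e.\ $|\beta| = \operatorname{arctanh} r = \tfrac12\ln\frac{1+r}{1-r}$, while matching phases gives $\arg\beta = \theta$; substituting the resulting $\cosh|\beta|$, $\sinh|\beta|$ and $\beta/|\beta| = e^{i\theta}$ back into $g(\beta)$ reproduces the claimed inverse formula. The one point requiring genuine care — and the main, if modest, obstacle — is the behaviour at the centre $\beta = 0$, $r = 0$, where the polar factor $\beta/|\beta|$ is undefined: there I would check that $g(0) = \id$, that the radial profile $r = \tanh|\beta|$ is a smooth strictly increasing bijection of $[0,\infty)$ onto $[0,1)$, and that $\beta\mapsto \frac{\beta}{|\beta|}\tanh|\beta| = \beta + O(|\beta|^3)$ extends smoothly across the origin with nonsingular differential, so that every arrow in the statement is a genuine diffeomorphism and not merely a continuous bijection. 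Equivariance and mutual compatibility of the three identifications are then automatic, since each map intertwines the relevant $SU(1,1)$-actions.
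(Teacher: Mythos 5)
Your proposal is correct and follows exactly the route the paper indicates: the paper's proof simply states that the computation ``can be easily deduced from the polar decomposition $SU(1,1) = \exp i\mathfrak{m}\times U(1)$,'' and you have carried out precisely that computation (orbit--stabilizer for $\mathrm{Ad}$ and for the M\"obius action at $0$, the explicit exponential of $\left(\begin{smallmatrix} 0 & \beta \\ \bar\beta & 0\end{smallmatrix}\right)$, and the inversion $|\beta| = \tfrac12\ln\tfrac{1+r}{1-r}$). Your additional check of smoothness at $\beta = 0$ is a welcome detail the paper leaves implicit.
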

 
 \begin{proof}
 We omit the computation which can be easily deduced from the polar decomposition $SU(1,1) = \exp i\mathfrak{m}\times U(1)$. 
 \end{proof}

\subsection{Explicit Mostow Decomposition of $SL(2, \mathbb{C})$}\label{Mostow}

Mostow decomposition Theorem \cite{Mostow,Tum1,Tum_Lar} gives a diffeomorphism
\begin{equation}\label{sl2}
SL(2, \mathbb{C})  = SU(2) \times \exp i\mathfrak{m} \times \exp i\mathfrak{t},
\end{equation}
where 
$
i\mathfrak{t} :=  \left\{  \left(\begin{array}{cc} \alpha & 0\\0 & -\alpha \end{array}\right), \alpha\in\mathbb{R} \right\}
$ and  
 $
i\mathfrak{m}  = \left\{  \left(\begin{array}{cc} 0 & \bar{\beta} \\ \beta & 0 \end{array}\right),  \beta\in\mathbb{C}\right\}.
$
\begin{proposition}\label{Mostow_SL}
Mostow decomposition 
$
SL(2, \mathbb{C})  = SU(2) \times \exp i\mathfrak{m} \times \exp i\mathfrak{t},
$
maps a matrix $\left(\begin{array}{cc} a & b\\c & d \end{array}\right)\in SL(2, \mathbb{C})$ to the product $k\cdot f\cdot e$, where
\[
\begin{array}{l}
e :=   \left(\begin{array}{cc} \alpha & 0\\0 & 1/\alpha \end{array}\right)\in\exp i\mathfrak{t}, \textrm{ with } \alpha := \sqrt{\frac{|a|^2 + |c|^2}{|b|^2 + |d|^2}},
\\
f := \left(\begin{array}{cc} \cosh |\beta| & \frac{\beta}{|\beta|}\sinh |\beta| \\\frac{\bar{\beta}}{|\beta|}\sinh|\beta| & \cosh |\beta| \end{array}\right)\in\exp i\mathfrak{m}, 
\end{array}
\]
with
$$
\beta := \frac{\bar{a}b + \bar{c} d}{|\bar{a}b + \bar{c} d|} \frac{\ln\left(\sqrt{(|a|^2 + |c|^2)(|b|^2 + |d|^2)} + |\bar{a}b + \bar{c} d|\right)}{2}
$$
and
\begin{equation}
k := \left(\begin{array}{cc} a & b\\c & d \end{array}\right) e^{-1} f^{-1}.
\end{equation}
\end{proposition}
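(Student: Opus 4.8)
The plan is to verify the announced factorization directly, using that Mostow's theorem (cited above) already guarantees \emph{existence and uniqueness} of a decomposition $g=k\cdot f\cdot e$ with $k\in SU(2)$, $f\in\exp i\mathfrak{m}$, $e\in\exp i\mathfrak{t}$; so it suffices to produce matrices $e,f,k$ of the prescribed shapes whose product is $g$. The decisive simplification is that the elements of $i\mathfrak{t}$ and $i\mathfrak{m}$ are \emph{Hermitian} matrices, so $f=f^{*}$ and $e=e^{*}$ are positive definite while $k$ is unitary. Hence forming $g^{*}g$ eliminates $k$: $g^{*}g=(kfe)^{*}(kfe)=e f^{*}k^{*}k f e=e f^{2}e$. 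This decouples the problem: one first reads $e$ and $f$ off the positive Hermitian matrix $g^{*}g$ via the factorization $g^{*}g=ef^{2}e$, and only then \emph{defines} $k:=g\,e^{-1}f^{-1}$, whose unitarity is then automatic.

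First I would compute $g^{*}g=\left(\begin{smallmatrix}|a|^{2}+|c|^{2} & \bar a b+\bar c d\\ a\bar b+c\bar d & |b|^{2}+|d|^{2}\end{smallmatrix}\right)$. For the right-hand side, squaring the proposed $f$ (using $\cosh^{2}|\beta|+\sinh^{2}|\beta|=\cosh 2|\beta|$ and $2\cosh|\beta|\sinh|\beta|=\sinh 2|\beta|$) gives $f^{2}=\left(\begin{smallmatrix}\cosh 2|\beta| & \frac{\beta}{|\beta|}\sinh 2|\beta|\\ \frac{\bar\beta}{|\beta|}\sinh 2|\beta| & \cosh 2|\beta|\end{smallmatrix}\right)$, and conjugating by the diagonal matrix $e=\left(\begin{smallmatrix}\alpha & 0\\ 0 & 1/\alpha\end{smallmatrix}\right)$ yields $ef^{2}e=\left(\begin{smallmatrix}\alpha^{2}\cosh 2|\beta| & \frac{\beta}{|\beta|}\sinh 2|\beta|\\ \frac{\bar\beta}{|\beta|}\sinh 2|\beta| & \alpha^{-2}\cosh 2|\beta|\end{smallmatrix}\right)$. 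Now I match the four entries of $g^{*}g=ef^{2}e$. The ratio of the two diagonal entries fixes $\alpha$, while their product gives $\cosh 2|\beta|=\sqrt{(|a|^{2}+|c|^{2})(|b|^{2}+|d|^{2})}$; the off-diagonal entry simultaneously fixes the phase $\frac{\beta}{|\beta|}=\frac{\bar a b+\bar c d}{|\bar a b+\bar c d|}$ and the modulus through $\sinh 2|\beta|=|\bar a b+\bar c d|$.

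The two readings of $\cosh 2|\beta|$ and $\sinh 2|\beta|$ are mutually compatible exactly because the identity $\cosh^{2}-\sinh^{2}=1$ translates into $(|a|^{2}+|c|^{2})(|b|^{2}+|d|^{2})-|\bar a b+\bar c d|^{2}=|\det g|^{2}=1$, which holds since $g\in SL(2,\mathbb{C})$; this is precisely what makes the factorization $g^{*}g=ef^{2}e$ solvable. Inverting $\sinh 2|\beta|=|\bar a b+\bar c d|$ with $\operatorname{arcsinh} t=\ln\!\bigl(t+\sqrt{t^{2}+1}\bigr)$ and $\sqrt{|\bar a b+\bar c d|^{2}+1}=\sqrt{(|a|^{2}+|c|^{2})(|b|^{2}+|d|^{2})}$ reproduces the stated logarithmic expression for $|\beta|$, and together with the phase this is the claimed $\beta$. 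Finally, with $e,f$ so determined I put $k:=g\,e^{-1}f^{-1}$; then $k^{*}k=f^{-1}e^{-1}(g^{*}g)e^{-1}f^{-1}=f^{-1}e^{-1}(ef^{2}e)e^{-1}f^{-1}=\mathrm{Id}$ by construction, and $\det k=\det g/(\det e\,\det f)=1$, so $k\in SU(2)$, as required.

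I expect the only genuine subtlety, and hence the step deserving the most care, to be well-definedness and the degenerate configurations rather than the algebra: $|a|^{2}+|c|^{2}$ and $|b|^{2}+|d|^{2}$ are strictly positive because $g$ is invertible (so $\alpha>0$ is well-defined), whereas the phase $\beta/|\beta|$ is undefined exactly when $\bar a b+\bar c d=0$, in which case $\sinh 2|\beta|=0$ forces $\beta=0$ and $f=\mathrm{Id}$, so the formulas are to be read in that limit. Everything else is the routine bookkeeping of comparing matrix entries, the real content being concentrated in the single identity $g^{*}g=ef^{2}e$.
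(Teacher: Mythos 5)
Your proof is correct and follows essentially the same route as the paper's: both reduce the problem to the single identity $g^{*}g = e f^{2} e$, read off $e$ and $f$ by matching entries, and use $\det g = 1$ to check $\cosh^{2}2|\beta| - \sinh^{2}2|\beta| = 1$; your additions (the explicit verification that $k^{*}k = \mathrm{Id}$ and $\det k = 1$, and the treatment of the degenerate case $\bar a b + \bar c d = 0$) are details the paper leaves implicit. One caution: at the step where you say the ratio of the diagonal entries ``fixes $\alpha$'', actually writing it out gives $\alpha^{4} = \frac{|a|^{2}+|c|^{2}}{|b|^{2}+|d|^{2}}$, i.e.\ $\alpha$ is the \emph{fourth} root of that quotient, not the square root appearing in the statement (with the stated $\alpha$ one gets $\alpha^{2}\cosh 2|\beta| \neq |a|^{2}+|c|^{2}$ in general, so $k$ would fail to be unitary); this discrepancy is present in the paper's proof as well, and your write-up should make the derived value of $\alpha$ explicit rather than asserting compatibility with the stated one.
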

\begin{proof}
Indeed, one has 
\begin{equation}\label{rhs}
e f^2 e = \left(\begin{array}{cc} a & b\\c & d \end{array}\right)^*\left(\begin{array}{cc} a & b\\c & d \end{array}\right) = \left(\begin{array}{cc} |a|^2 + |c|^2 & \bar{a}b+\bar{c} d\\\bar{b} a + \bar{d} c & |b|^2 + |d|^2\end{array}\right),
\end{equation}
and 
$$
e f^2 e = \left(\begin{array}{cc} \alpha^2 \cosh 2 |\beta|  & \frac{\beta}{|\beta|} \sinh 2 |\beta| \\ \frac{\bar{\beta}}{|\beta|} \sinh 2|\beta|& \frac{1}{\alpha^2}\cosh  2|\beta| \end{array}\right),
$$
with 
$
\cosh 2|\beta| = \sqrt{(|a|^2 + |c|^2)(|b|^2 + |d|^2)}
$
and
$
\sinh 2|\beta| = |\bar{a}b + \bar{c}d|.
$
Note that since $\det \left(\begin{array}{cc} a & b\\c & d \end{array}\right) = 1$,  the determinant of the right hand side of \eqref{rhs} is $1$. Hence,  one verifies that $ \cosh^2 2|\beta| - \sinh^2 2|\beta| = (|a|^2 + |c|^2)(|b|^2 + |d|^2) - |\bar{a}b + \bar{c}d|^2 = 1$.
\end{proof}

\subsection{Diffeomorphism between $T\mathbb{CP}^1$ and $ \mathcal{O}^\mathbb{C}_x$}

\begin{theorem}\label{cp}
\begin{enumerate}
\item 
There is a $SU(2)$-equivariant fibration $\pi: \mathcal{O}^\mathbb{C}_x\rightarrow \mathcal{O}_x$ of the complex (co-)adjoint orbit 
\[
\mathcal{O}^\mathbb{C}_x  \simeq  \left\{ (\ell_1, \ell_2)\in \mathbb{CP}^1\times \mathbb{CP}^1, \ell_1 \oplus \ell_2 = \mathbb{C}^2\right\}
\]
over the compact (co-)adjoint orbit $\mathcal{O}_x\simeq \mathbb{CP}(1)\simeq \mathbb{S}^2$ given by
\[
 \begin{array}{cccc}
 \pi: & \mathcal{O}^\mathbb{C}_x &\longrightarrow& \mathcal{O}_x\\
 & \textrm{Ad}(k\cdot f\cdot e)(x) &\longmapsto& \textrm{Ad}(k)(x),
 \end{array}
\]
where $(k, f, e)\in SU(2) \times \exp i\mathfrak{m} \times \exp i\mathfrak{t}$.
In particular, the fiber $\pi^{-1}(x)$ over $x\in\mathcal{O}_x^{\mathbb{C}}$ is $\textrm{Ad}(e^{i \mathfrak{m}})(x) = \mathcal{O}_x^{n.c.}\simeq \mathbb{H}^2$. 

\item Moreover the complex (co-)adjoint orbit is isomorphic to the (co-)tangent bundle of $\mathbb{CP}^1$ via the following $SU(2)$-equivariant diffeomorphism
\[
  \begin{array}{cccc}
 \Phi: & T\mathbb{CP}^1 = T\mathcal{O}_x  & \longrightarrow &  \mathcal{O}^\mathbb{C}_x  \\
&  \left(z = g~x~g^{-1}, v = g~\mathfrak{a}~g^{-1}\right) \textrm{ with } g\in SU(2) \textrm{ and }\mathfrak{a}\in\mathfrak{m} & \longmapsto & g~e^{i \mathfrak{a}}~ x~ e^{- i \mathfrak{a}} ~g^{-1}
 \end{array}
 \]
\end{enumerate}
In other words,  $\mathcal{O}_x^{\mathbb{C}}$ is the complexification of the compact orbit $\mathcal{O}_x\simeq \mathbb{CP}^1$ and is isomorphic to the tangent bundle of $\mathbb{CP}^1$. The hyperbolic space $\mathbb{H}^2$ sits inside  $\mathcal{O}_x^{\mathbb{C}}$ as the fiber over $x\in \mathcal{O}_x \simeq \mathbb{CP}^1$ and is the orbit of $x$ under the adjoint action of $SU(1,1)\subset SL(2,\mathbb{C})$.
\end{theorem}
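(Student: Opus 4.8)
The plan is to derive both statements from the existence and uniqueness of Mostow's decomposition (Proposition~\ref{Mostow_SL}); the only real work is book-keeping, namely tracking how the stabilizer $\operatorname{Stab}_{SL(2,\mathbb{C})}(x)=\mathbb{C}^{*}$ interacts with the three Mostow factors $SU(2)\times\exp i\mathfrak{m}\times\exp i\mathfrak{t}$.

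For (1) I would first note that $\mathbb{C}^{*}$ splits as $U(1)\cdot\exp i\mathfrak{t}$ with $U(1)=\operatorname{Stab}_{SU(2)}(x)$, and that $y\in\mathcal{O}^{\mathbb{C}}_{x}$ equals $\operatorname{Ad}(g)(x)$ with $g\in SL(2,\mathbb{C})$ determined only up to right multiplication by $\mathbb{C}^{*}$. Writing $g=kfe$ and replacing $g$ by $gs$ with $s=ut\in U(1)\cdot\exp i\mathfrak{t}$, the Mostow factors become $ku$, $\operatorname{Ad}(u^{-1})f$ and $et$: this uses that the diagonal factors commute and that conjugation by $u\in U(1)$ preserves $\exp i\mathfrak{m}$ (since $[\mathfrak{t},\mathfrak{m}]\subset\mathfrak{m}$), and uniqueness guarantees these are the genuine new factors. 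As $u$ stabilizes $x$, $\operatorname{Ad}(ku)(x)=\operatorname{Ad}(k)(x)$, so $\pi$ is well defined; equivariance is immediate since left multiplication by $h\in SU(2)$ changes $k$ into $hk$ and leaves $f,e$ untouched. For the fibre, $e$ and the elements of $U(1)$ stabilize $x$, so $\pi^{-1}(x)=\{\operatorname{Ad}(kf)(x):k\in U(1),\ f\in\exp i\mathfrak{m}\}$; absorbing $k$ through $\operatorname{Ad}(k)$ identifies this with $\operatorname{Ad}(\exp i\mathfrak{m})(x)=\mathcal{O}_{x}^{n.c.}\simeq\mathbb{H}^{2}$ by Proposition~\ref{non-compact}.

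For (2) I would first check that $\Phi$ is well defined: at $z=\operatorname{Ad}(g)(x)$ a tangent vector is $v=\operatorname{Ad}(g)(\mathfrak{a})$ for a unique $\mathfrak{a}\in\mathfrak{m}$ (because $T_{x}\mathcal{O}_{x}=\mathfrak{m}$), and replacing $g$ by $gu$ with $u\in U(1)$ forces $\mathfrak{a}$ to become $\operatorname{Ad}(u^{-1})\mathfrak{a}$; the identity $gu\,e^{i\operatorname{Ad}(u^{-1})\mathfrak{a}}=g\,e^{i\mathfrak{a}}\,u$ together with $\operatorname{Ad}(u)(x)=x$ then shows $\operatorname{Ad}(g\,e^{i\mathfrak{a}})(x)$ is unchanged, and equivariance follows again by left translation. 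The decisive step is bijectivity, which I read off from Mostow: surjectivity holds because $\operatorname{Ad}(kfe)(x)=\operatorname{Ad}(kf)(x)$ (as $e$ stabilizes $x$), so every $y$ is $\Phi\bigl(\operatorname{Ad}(k)(x),\operatorname{Ad}(k)(\mathfrak{a})\bigr)$ once $f$ is written as $e^{i\mathfrak{a}}$; for injectivity I note that $k\,e^{i\mathfrak{a}}$ is already in Mostow form (trivial $\exp i\mathfrak{t}$-factor), so $\pi\circ\Phi$ recovers $z$, after which the uniqueness of the $\exp i\mathfrak{m}$-factor, combined with the transformation law above, pins down $v$.

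Smoothness of $\Phi$ and of $\Phi^{-1}$ then follows from the explicit smooth formulas of Proposition~\ref{Mostow_SL} and from the fact that $\exp\colon i\mathfrak{m}\to\exp i\mathfrak{m}$ is a diffeomorphism. I expect the main obstacle to be organizational rather than conceptual: one must verify precisely that the $U(1)$-ambiguity in the base point and the $\mathbb{C}^{*}$-ambiguity in the representative $g$ propagate through the three Mostow factors so that the apparent ambiguities in $k$ and in $\mathfrak{a}$ cancel against the fact that $U(1)$ and $\exp i\mathfrak{t}$ stabilize $x$. Once this cancellation is in place, well-definedness, injectivity, and the identification of the fibre all follow from the uniqueness in Mostow's theorem.
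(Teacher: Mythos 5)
Your proposal is correct and follows essentially the same route as the paper's own (very terse) proof: well-definedness of $\pi$ and $\Phi$ via the uniqueness of the Mostow decomposition combined with the fact that $U(1)$ and $\exp i\mathfrak{t}$ stabilize $x$ and that $\operatorname{Ad}(U(1))$ preserves $\mathfrak{m}$. You simply spell out the bookkeeping (the splitting $\mathbb{C}^{*}=U(1)\cdot\exp i\mathfrak{t}$, the transformation of the three factors, injectivity and surjectivity of $\Phi$) that the paper leaves implicit with ``is clearly a diffeomorphism''.
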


\begin{proof}
The map $\pi$ is well-defined by the uniqueness of Mostow decomposition~\eqref{sl2} 
together with the fact that $\exp i\mathfrak{t}$ stabilizes $x\in \mathcal{O}_x$. The tangent space at $x\in\mathbb{CP}(1)$ is identified with $\mathfrak{m} = \mathfrak{g}/\mathfrak{t}$, and the tangent space at $g x g^{-1}\in\mathbb{CP}(1)$ is identified with $g\mathfrak{m}g^{-1}.$ Note that $\mathfrak{m}$ is stable under the adjoint action of the isotropy group.  The diffeomorphism $\Phi$ maps the fibers of the canonical projection $p: T\mathbb{CP}^1\rightarrow \mathbb{CP}^1$ to the fibers of  $\pi: \mathcal{O}^\mathbb{C}_x\rightarrow \mathcal{O}_x$ and is clearly a diffeomorphism.
\end{proof}

\section{The hyperbolic  disc $\mathbb{H}^2$ inside $T\mathbb{CP}^1 \simeq \mathcal{O}^\mathbb{C}_x $}

\subsection{The complex structure of $\mathbb{CP}^1$ viewed as coadjoint orbit}

As before, 
consider $G = SU(2)$, $G^{\mathbb{C}} = SL(2, \mathbb{C})$ and the adjoint orbits $\mathcal{O}_x = G\cdot x$ and $\mathcal{O}_x^\mathbb{C} = G^\mathbb{C}\cdot x$ of 
$x := \left(\begin{array}{cc} i & 0\\0 & -i \end{array}\right)\in \mathfrak{su}(2).$
Then $\textrm{ad}_x$ acts on 
$
T_x\mathcal{O}_x \simeq \mathfrak{m} := \left\{ \left(\begin{array}{cc} 0  & -\bar{c}\\c & 0\end{array}\right), c \in \mathbb{C}\right\}
$
by
$
\textrm{ad}_x \left(\begin{array}{cc} 0  & -\bar{c}\\c & 0\end{array}\right) = 2 \left(\begin{array}{cc} 0  & -i\bar{c}\\-ic & 0\end{array}\right)
$
hence the constant $\alpha$ appearing in Theorem~\ref{hyp} is $\alpha = \frac{1}{2}$ and the complex structure $I$ of the Hermitian-symmetric orbit $\mathcal{O}_x\simeq \mathbb{CP}(1)$ is given at $T_x\mathcal{O}_x$ by 
$
I X^{\mathfrak{c}} := X^{\frac{1}{2}\textrm{ad}_x\mathfrak{c}}, 
$
$\mathfrak{c}\in\mathfrak{m}$. 
\subsection{The complex structure $I_2$ of the hyperk\"ahler manifold $T\mathbb{CP}^1 \simeq \mathcal{O}^\mathbb{C}_x $}

\begin{theorem}\label{non-holo}
The complex structure $I_2$ defined in \eqref{I2} on the hyperk\"ahler manifold  $\mathcal{O}^\mathbb{C}_x\simeq T^*\mathbb{CP}(1)$ and extending the complex structure of $\mathbb{CP}(1)$, restricts to a complex structure on $\mathbb{H}^2$ which differs from the natural complex structure of $\mathbb{H}^2 = \{ z\in \mathbb{C}, |z|<1\}$ induced by its embedding into $\mathbb{C}$, see Figure~\ref{fig}.
\end{theorem}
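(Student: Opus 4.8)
The plan is to transport $I_2$ from the fiber $\mathbb{H}^2 = \mathcal{O}_x^{n.c.} = \pi^{-1}(x)$ to the flat coordinate $z$ through the diffeomorphism $\Psi$ of Proposition~\ref{non-compact}, and then exhibit a single point at which the result is not multiplication by $i$. First I would note that~\eqref{I2} already makes the restriction meaningful: since $\frac{1}{\alpha}\textrm{ad}_x$ preserves $\mathfrak{m}$, one has $I_2 X^{i\mathfrak{d}} = -X^{i(\frac{1}{\alpha}\textrm{ad}_x\mathfrak{d})}$, again of the form $X^{i\mathfrak{d}'}$, so $I_2$ stabilises the vertical space $T_y\mathbb{H}^2 = \{X^{i\mathfrak{d}}(y) : \mathfrak{d}\in\mathfrak{m}\}$ at every $y\in\pi^{-1}(x)$ and the induced structure on $\mathbb{H}^2$ is well defined.

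A symmetry reduction localises the computation. The common circle $U(1) = \textrm{Stab}_{SU(2)}(x)\cap SU(1,1)$, generated by $x$, acts on the fiber through $\textrm{Ad}(e^{\psi x})$; this is at once a rotation $z\mapsto e^{-2i\psi}z$ of the Poincar\'e disc, hence a biholomorphism of the natural structure, and an element of $SU(2)$, hence a symmetry of the whole hyperk\"ahler structure, in particular of $I_2$ (Theorem~\ref{hyp}(1)). Both structures being $U(1)$-invariant, it suffices to compare them along one ray. I would take $y(s) = \textrm{Ad}(e^{is\mathfrak{e}})(x)$ for a fixed nonzero $\mathfrak{e}\in\mathfrak{m}$, which Proposition~\ref{non-compact} sends to the radial ray $z(s) = -i\tanh s$.

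Along this ray I would compute, by elementary $2\times 2$ brackets, the radial vector $R(s) = \dot y(s) = X^{i\mathfrak{e}}(y(s)) = [i\mathfrak{e}, y(s)]$, the angular vector $A(s) = X^{x}(y(s)) = [x, y(s)]$ generating the rotation, and the image $I_2 R(s) = -X^{\frac{1}{\alpha}\textrm{ad}_x(i\mathfrak{e})}(y(s))$ prescribed by~\eqref{I2}. The outcome is $A(s) = 2\sinh(2s)\,i\mathfrak{e}$ and $I_2 R(s) = -2\cosh(2s)\,i\mathfrak{e}$, so that $I_2$ sends the radial to the angular direction with $I_2 R(s) = -\coth(2s)\,A(s)$. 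Transporting through $d\Psi$ — using $\dot z(s) = -i\,\text{sech}^2 s$ for $R(s)$ and $\Psi_*^{-1}A(s) = -2iz(s) = -2\tanh s$ for the rotation generator — I would find that $\Psi^*I_2$ sends the radial vector $\dot z(s)$ to the (real) angular vector $-\coth(2s)\cdot(-2\tanh s) = 1+\tanh^2 s$, whereas multiplication by $i$ sends it to $i\dot z(s) = \text{sech}^2 s = 1-\tanh^2 s$. Both images lie along the same real axis but have different magnitudes for every $s\neq 0$, which proves the theorem; equivalently $\Psi^*I_2$ is the anisotropic complex structure whose principal axes are radial and angular with aspect ratio $\frac{1+\tanh^2 s}{1-\tanh^2 s}\neq 1$, the ellipse field drawn in Figure~\ref{fig}, agreeing with $i$ only at the center $s=0$.

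The crux, and the only real obstacle, is to carry the complex structure honestly between the two charts on the fiber: the Lie-theoretic exponential chart $\mathfrak{a}\mapsto\textrm{Ad}(e^{\mathfrak{a}})(x)$, in which $I_2$ obeys the clean rule~\eqref{I2}, and the flat chart $z$ coming from $\mathbb{H}^2\subset\mathbb{C}$, which differs from it by the nonlinear radial rescaling $|z| = \tanh|\beta|$. It is exactly the non-conformality of this $\tanh$-reparameterization, combined with the fact that $I_2$ rotates exponential-chart directions rather than flat-chart ones, that manufactures the discrepancy. One must also handle the degeneration of the angular field $A(s)$ at $s=0$ (polar coordinates are singular at the center) by phrasing the comparison through the globally defined vector $R(s)$ and its image, the limit $s\to 0$ then recovering the expected agreement at the center.
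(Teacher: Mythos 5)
Your overall route is the same as the paper's: restrict $I_2$ to the fiber $\pi^{-1}(x)=\mathcal{O}_x^{n.c.}$, push the relevant tangent vectors into the disc through the identification $\mathrm{Ad}(g)(x)\mapsto [g]_{U(1)}\mapsto g\cdot 0$ of Proposition~\ref{non-compact}, and compare the result with multiplication by $i$. The $U(1)$-reduction to a single ray is a legitimate extra simplification, and your Lie-algebra computations are correct: with $y(s)=\mathrm{Ad}(e^{is\mathfrak{e}})(x)$ one indeed finds $A(s)=2\sinh(2s)\,i\mathfrak{e}$, $I_2R(s)=-2\cosh(2s)\,i\mathfrak{e}$, hence $I_2R(s)=-\coth(2s)\,A(s)$.

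There is, however, a sign error at the one step you yourself identify as the crux, the transport to the flat chart, and it leads you to a false subsidiary claim. Under the identification of Proposition~\ref{non-compact}, $\mathrm{Ad}(h)$ corresponds to the M\"obius action of $h$, so $e^{\psi x}=\mathrm{diag}(e^{i\psi},e^{-i\psi})$ acts on the disc by $Z\mapsto e^{i\psi}Z/e^{-i\psi}=e^{+2i\psi}Z$, not $e^{-2i\psi}Z$. Hence the pushforward of $A(s)$ is $+2iz(s)=+2\tanh s$, and $I_2$ sends $\dot z(s)=-i\,\mathrm{sech}^2 s$ to $-\coth(2s)\cdot 2\tanh s=-(1+\tanh^2 s)$, whereas $i\dot z(s)=+(1-\tanh^2 s)$. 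The two images differ for \emph{every} $s$, including $s=0$: at the center the induced structure sends $-i$ to $-1$, i.e.\ it is multiplication by $-i$, the conjugate structure. This matches the paper's computation, where $\tilde X_1=i(1+Z^2)\mapsto\tilde X_2=1-Z^2$ reads $i\mapsto 1$ at $Z=0$; equivalently, $\det P=r^4-1<0$ shows the basis $(\tilde X_1,\tilde X_2)$ is negatively oriented, so the induced structure defines the orientation opposite to that of $i$ and can agree with $i$ nowhere. Your closing statements --- that the limit $s\to 0$ ``recovers the expected agreement at the center'' and that the induced structure agrees with $i$ ``only at the center'' --- are therefore wrong. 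The Theorem's conclusion still follows from your computation once the sign is corrected (indeed more easily, since the discrepancy is then visible already at the origin), but the qualitative picture you extract is not the correct one, and the sign must be fixed before the argument can stand.
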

\begin{figure}[H]\label{fig}
\centering
\includegraphics[width = 0.5\linewidth]{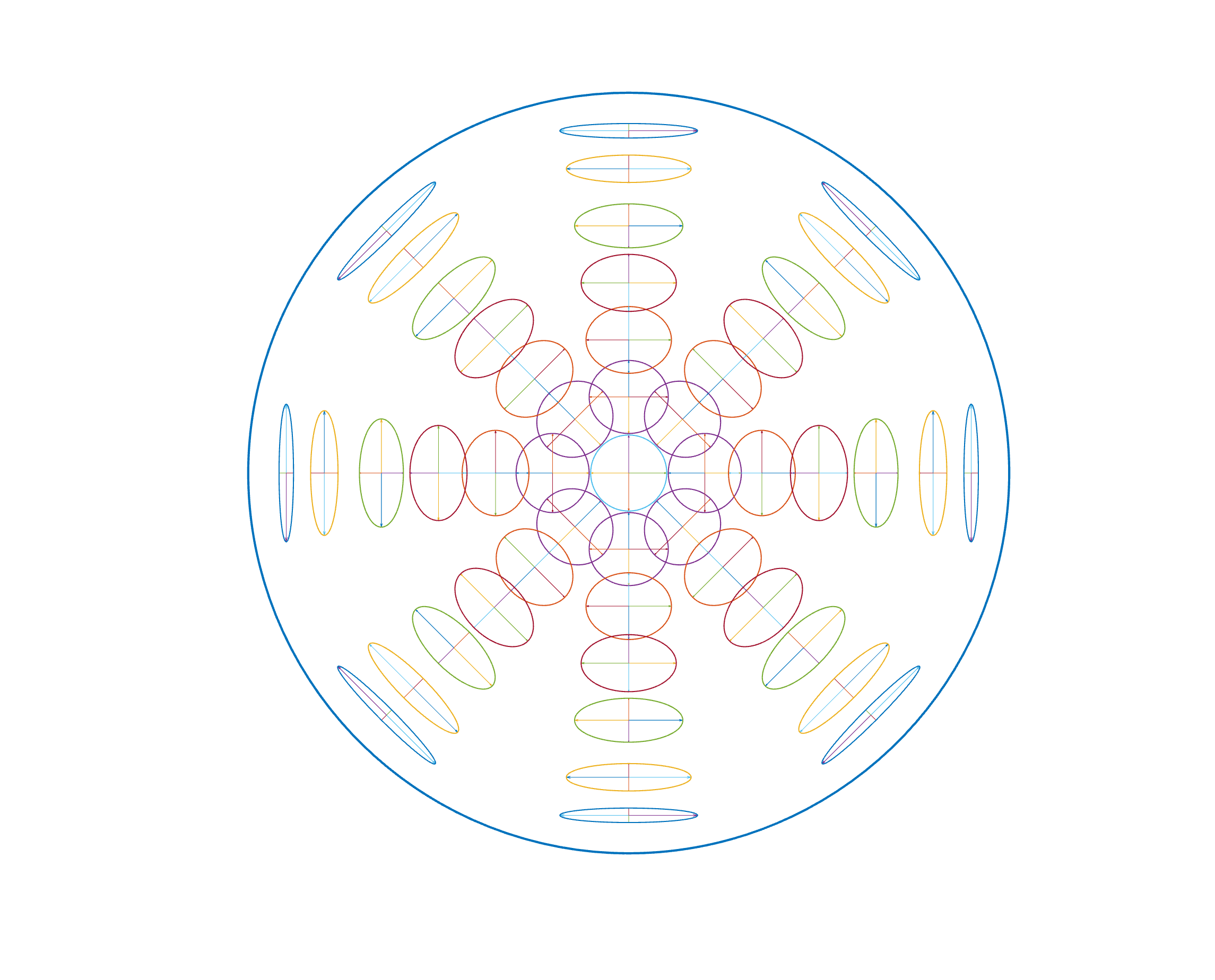}
\caption{Representation of the complex structure induced on $\mathbb{H}^2$ by the complex structure $I_2$ of the hyperk\"ahler coadjoint orbit $\mathcal{O}_x^\mathbb{C}\simeq T^*\mathbb{CP}(1)$. Each ellipse represents the complex structure $I_2$ at the center of the ellipse. The complex structure maps the tangent vector given at the center of each ellipse  by half of one axis  to the tangent vector given by half of the other axis.}
\end{figure}

\begin{proof}
The complex structure $I_2$ on $\mathcal{O}^\mathbb{C}_x$ defined by \eqref{I2} extends the complex structure $I$ of $\mathcal{O}_x = \mathbb{CP}^1$. Moreover $I_2$ restricts to a complex structure on $\pi^{-1}(x) = \mathcal{O}_x^{n.c.}\simeq \mathbb{H}^2$ given at 
$$
y :=  \textrm{Ad}\left(\begin{smallmatrix}\cosh |\beta| & \frac{\beta}{|\beta|} \sinh |\beta| \\ \frac{\bar{\beta}}{|\beta|} \sinh |\beta| & \cosh |\beta|\end{smallmatrix}\right)(x)\in \mathcal{O}_x^{n.c.}
$$
by the endomorphism of the tangent space $T_y\mathcal{O}_x^{n.c.}$ which maps the tangent vector $X^{i\mathfrak{d}}(y) $ generated by $i\mathfrak{d} := \begin{pmatrix}0  & c\\ \bar{c} & 0 \end{pmatrix}$ to the tangent vector $I_2 X^{i\mathfrak{d}}(y) $ given by
\[
\begin{array}{ll}
I_2 X^{i\mathfrak{d}}(y) &= I_2 \left[  \left(\begin{array}{cc} 0  & c\\ \bar{c} & 0\end{array}\right), y \right] = -\left[  \frac{1}{2} \left[\left(\begin{array}{cc} i  & 0\\0 & -i\end{array}\right), \left(\begin{array}{cc} 0  & c\\ \bar{c}& 0\end{array}\right)\right], y \right]
 =  \left[  \left(\begin{array}{cc} 0  & -ic\\ i\bar{c} & 0\end{array}\right), y \right].
\end{array}
\]
Recall that $SU(1,1)$ acts on $\mathbb{H}^2$ by M\"obius transformations:
\[
\begin{pmatrix}a & b\\ \bar{b} & \bar{a} \end{pmatrix}\cdot Z = \frac{a Z + b}{\bar{b} Z + \bar{a}}.
\]
In particular the base point $0 \in \mathbb{H}^2$ is mapped to $\frac{b}{\bar{a}}$. The isotropy of $0\in \mathbb{H}^2$ is $U(1)$, hence the quotient space $SU(1,1)/U(1)$ is identified with the orbit of $0$ by 
\[
\left[\begin{pmatrix}a & b\\ \bar{b} & \bar{a} \end{pmatrix}\right]_{U(1)} \mapsto \frac{b}{\bar{a}}\in\mathbb{H}^2.
\]
Given a tangent vector $X$ in $T_y\mathcal{O}_x^{n.c.}$ written as the velocity vector of a curve of the form  $c(t) = \textrm{Ad}(g(t))(x)$, with $g(t) = \begin{pmatrix}a(t) & b(t)\\ \bar{b(t)} & \bar{a(t)} \end{pmatrix} \in SU(1,1)$, the corresponding tangent vector to the quotient space $SU(1,1)/U(1)$
is the velocity vector of the curve $\left[\begin{pmatrix}a(t) & b(t)\\ \bar{b}(t) & \bar{a}(t) \end{pmatrix}\right]_{U(1)}$, and the corresponding vector in $\mathbb{H}^2$ is the velocity of the curve $\tilde{c}(t) = \frac{b(t)}{\bar{a}(t)}$.
We will consider the following two tangent vectors spanning the tangent space $T_y\mathcal{O}_x^{n.c.}$:
\[
X_1 = \left[  \left(\begin{array}{cc} 0  & i\\ -i & 0\end{array}\right), y \right]
\textrm{ and }
X_2 = \left[  \left(\begin{array}{cc} 0  & 1\\ 1 & 0\end{array}\right), y \right].
\]
Note that $I_2 X_1 = X_2$ and $I_2 X_2 = - X_1$. One has
\[
\begin{array}{ll}
X_1 & = \frac{d}{dt}_{|t = 0} \textrm{Ad}\left(\exp  t\left(\begin{array}{cc} 0  & i\\ -i & 0\end{array}\right)\right)(y) \\ 
&= \frac{d}{dt}_{|t = 0} \textrm{Ad}\left(\exp  t\left(\begin{array}{cc} 0  & i\\ -i & 0\end{array}\right) 
\cdot \exp  \left(\begin{array}{cc} 0  & \beta\\ \bar{\beta}  & 0\end{array}\right)\right)(x)\\
& = \frac{d}{dt}_{|t = 0} \textrm{Ad}\left(\left(\begin{array}{cc}\cosh t & i\sinh t \\ -i \sinh t & \cosh t\end{array}\right)\cdot\left(\begin{array}{cc}\cosh |\beta| & \frac{\beta}{|\beta|} \sinh |\beta| \\ \frac{\bar{\beta}}{|\beta|} \sinh |\beta| & \cosh |\beta|\end{array}\right)\right)(x)\\
& = \frac{d}{dt}_{|t = 0} \textrm{Ad}\left(\begin{array}{cr} \cosh t \cosh |\beta| +\frac{i\bar{\beta}}{|\beta|} \sinh t \sinh |\beta| & \quad \frac{\beta}{|\beta|}\cosh t  \sinh |\beta| + i \sinh t \cosh |\beta|\\   -i  \sinh t \cosh |\beta|  + \frac{\bar{\beta}}{|\beta|} \cosh t \sinh |\beta| &  -i \frac{\beta}{|\beta|} \sinh t \sinh |\beta| + \cosh t \cosh |\beta| \end{array}\right)(x)\\
\end{array},
\]
where the  matrix appearing in the last line belongs to $SU(1,1)$ defined in \eqref{su11}. Similarly
\[
\begin{array}{ll}
X_2 & = \frac{d}{dt}_{|t = 0} \textrm{Ad}\left(\exp  t\left(\begin{array}{cc} 0  & 1\\ 1 & 0\end{array}\right)\right)(y) \\ 
&= \frac{d}{dt}_{|t = 0} \textrm{Ad}\left(\exp  t\left(\begin{array}{cc} 0  & 1\\ 1 & 0\end{array}\right) 
\cdot \exp  \left(\begin{array}{cc} 0  & \beta\\ \bar{\beta}  & 0\end{array}\right)\right)(x)\\
& = \frac{d}{dt}_{|t = 0} \textrm{Ad}\left(\left(\begin{array}{cc}\cosh t & \sinh t \\  \sinh t & \cosh t\end{array}\right)\cdot\left(\begin{array}{cc}\cosh |\beta| & \frac{\beta}{|\beta|} \sinh |\beta| \\ \frac{\bar{\beta}}{|\beta|} \sinh |\beta| & \cosh |\beta|\end{array}\right)\right)(x)\\
& = \frac{d}{dt}_{|t = 0} \textrm{Ad}\left(\begin{array}{cr} \cosh t \cosh |\beta| +\frac{\bar{\beta}}{|\beta|} \sinh t \sinh |\beta| & \quad \frac{\beta}{|\beta|}\cosh t  \sinh |\beta| +  \sinh t \cosh |\beta|\\     \sinh t \cosh |\beta|  + \frac{\bar{\beta}}{|\beta|} \cosh t \sinh |\beta| &  \frac{\beta}{|\beta|} \sinh t \sinh |\beta| + \cosh t \cosh |\beta| \end{array}\right)(x)\\
\end{array}
\]
It follows that the tangent vector $X_1$ (resp. $X_2$) corresponds to the tangent vector $\tilde{X}_1$ (resp. $\tilde{X}_2$) at $Z = \frac{\beta}{|\beta|}\tanh |\beta| \in \mathbb{H}^2$ given by
\[
\begin{array}{l}
\tilde{X}_1 = \frac{d}{dt}_{|t = 0} \left(\frac{\frac{\beta}{|\beta|}\cosh t  \sinh |\beta| + i \sinh t \cosh |\beta|}{-i \frac{\beta}{|\beta|} \sinh t \sinh |\beta| + \cosh t \cosh |\beta|}\right) = i \frac{\cosh^2|\beta| + \frac{\beta^2}{|\beta|^2} \sinh^2 |\beta|}{\cosh^2|\beta|} = i\left(1 + \frac{\beta^2}{|\beta|^2} \tanh^2 |\beta| \right) = i (1 + Z^2)
\\
\tilde{X}_2 = \frac{d}{dt}_{|t = 0} \left(\frac{\frac{\beta}{|\beta|}\cosh t  \sinh |\beta| +  \sinh t \cosh |\beta|}{\frac{\beta}{|\beta|} \sinh t \sinh |\beta| + \cosh t \cosh |\beta|}\right) =  \frac{\cosh^2|\beta| - \frac{\beta^2}{|\beta|^2} \sinh^2 |\beta|}{\cosh^2|\beta|} = 1 - Z^2
\end{array}
\]
Note that, in the basis $\{\tilde{X}_1, \tilde{X}_2\}$ of $T_Z\mathbb{H}^2$, the complex structure induced by $I_2$ on $\mathbb{H}^2$  reads $\begin{pmatrix} 0 & -1\\ 1 & 0\end{pmatrix}$. 
However $\{\tilde{X}_1, \tilde{X}_2\}$ is not an orthonormal basis for the Euclidean metric on $\mathbb{C}\simeq \mathbb{R}^2$. At $Z = r e^{i\theta}$, the expressions of $\tilde{X}_1$ and $\tilde{X}_2$ in the canonical basis of $\mathbb{R}^2$ are the columns of the following matrix
\[
P = \begin{pmatrix} -r^2 \sin(2\theta) & 1 -r^2 \cos(2\theta)\\ 1 +r^2 \cos(2\theta) & -r^2 \sin(2\theta) \end{pmatrix}
\]
The singular value decomposition of $P$, $P= U S V$, allows to write the complex structure induced by $I_2$ at $T_Z\mathbb{H}^2$ as the endomorphism of $\mathbb{R}^2$ given by
\[
P\begin{pmatrix} 0 & -1\\ 1 & 0\end{pmatrix}P^{-1} = U S \begin{pmatrix} 0 & -1\\ 1 & 0\end{pmatrix} S^{-1} U^{-1} = U \begin{pmatrix} 0 & -\frac{a}{b}\\ \frac{b}{a} & 0\end{pmatrix} U,
\] where $S$ is a diagonal matrix containing the singular values $a, b$ of $P$ ordered in decreasing order, and $U$ and $V$ are orthogonal matrices. Denote by $U_1$ and $U_2$ the columns of $U$. The complex structure at $Z = re^{i\theta}$ maps $U_1$ to $\frac{b}{a} U_2$ and $U_2$ to $-\frac{a}{b} U_1$. In Figure~\ref{fig}, the complex structure  is illustrated at  $Z = re^{i\theta}$ for $r =  \{0, 0.2, 0.35, 0.5, 0.65, 0.8, 0.9\}$ and $\theta = k \frac{2\pi}{8}, k = 0, \dots, 8$. Half of the great axis of each ellipse corresponds to a multiple of $U_1$ and its image by the complex structure is the vector corresponding to half of the small axis of the ellipse and making an angle of $+\frac{\pi}{2}$ with the previous one.
\end{proof}

\begin{credits}
\subsubsection{\ackname} This research is funded by FWF under the grant numbers~I-5015-N and PAT1179524. 
The author would like to thank Milan Niestijl for discussions on the subject at the ``Higher Structures and Field Theory'' conference held at Erwin Schroedinger Institut in Vienna, and for careful reading of the manuscript.
 We acknowledge the excellent working conditions and
     interactions at Erwin Schroedinger Institut, Vienna, during the
     thematic programme ``Infinite-dimensional Geometry: Theory and
     Applications'' where part of this work was completed. 
\end{credits}

%
%
%

\end{document}